\newcommand\PA{$\mathrm{P}_{s,n}$}
\newcommand\PB{$\mathrm{P}'_{s,n}$}
\newtheorem{theorem}{Theorem}
\newtheorem{proposition}[theorem]{Proposition}
\newtheorem{corollary}[theorem]{Corollary}
\newtheorem{lemma}[theorem]{Lemma}
\theoremstyle{definition}
\newtheorem*{defi}{Definition}
\newtheorem*{problemA}{Problem \PA}
\newtheorem*{problemB}{Problem \PB}
\theoremstyle{remark}
\newtheorem{remark}[theorem]{Remark}
\newcommand{\ka}{\kappa}
\newcommand{\la}{\lambda}
\newcommand{\te}{\theta}
\newcommand{\De}{\Delta}
\newcommand{\Om}{\Omega}
\def\RR{\mathbb{R}}
\def\ZZ{\mathbb{Z}}
\newcommand{\cR}{{\mathcal R}}
\newcommand{\cS}{{\mathcal S}}
\newcommand{\pa}{\partial}
\newcommand{\pd}{\partial}
\newcommand\minus\backslash
\newcommand\lan\langle
\newcommand\ran\rangle
\def\th{^{\text{th}}}
\renewcommand\leq\leqslant
\renewcommand\geq\geqslant
\newlength{\intwidth}
\newcommand\Dir{^{\mathrm{Dir}}}
\newcommand\Neu{^{\mathrm{Neu}}}
\begin{document}

\title{Spectral determination of semi-regular polygons}

\author{Alberto Enciso}
\address{Instituto de Ciencias Matem\'aticas, Consejo Superior de
  Investigaciones Cient\'\i ficas, 28049 Madrid, Spain}
\email{aenciso@icmat.es}

\author{Javier G\'omez-Serrano}
\address{Department of Mathematics, Princeton University, Princeton,
  NJ 08544, USA}
\email{jg27@math.princeton.edu}

%
%
\begin{abstract}
Let us say that an $n$-sided polygon is semi-regular if it is circumscriptible
 and its angles are all equal but possibly one, which is then larger
 than the rest. Regular polygons, in particular, are semi-regular.
We prove that semi-regular polygons  are spectrally determined in the class of
convex piecewise smooth domains. Specifically, we show that if $\Om$
is a convex piecewise smooth planar domain, possibly with straight corners,
whose Dirichlet or Neumann spectrum coincides with that of an $n$-sided
semi-regular polygon~$P_n$, then $\Om$ is congruent to~$P_n$.
\end{abstract}
\maketitle

\section{Introduction}

The inverse spectral problem for a bounded planar domain~$\Om_0$ is to
ascertain whether any domain $\Om\subset\RR^2$ with the same
spectrum (with, say, Dirichlet boundary conditions) is actually congruent to~$\Om_0$. Since Marc Kac reformulated this
problem in 1965 as ``hearing the shape of a drum''~\cite{Kac}, the
spectral determination of planar domains (and of more general
Riemannian manifolds, with or without boundary) has become a major research topic for analysts
and geometers. We recall that the Dirichlet spectrum of a planar
domain~$\Om$ is the set (with multiplicities) of numbers~$\la_k$ for which the
boundary value problem
\[
\De u_k+\la_k u_k=0\quad \text{in }\Om\,,\qquad u_k|_{\pd\Om}=0\,,
\]
admits a nontrivial solution. The Neumann eigenvalues $\mu_k$ of a domain are
similarly defined in terms of the boundary value problem
\[
\De v_k+\mu_k v_k=0\quad \text{in }\Om\,,\qquad \pd_\nu v_k=0\,,
\]

Most of the existing results of the inverse spectral
problem are negative, meaning that they assert that a certain domain
or manifold cannot be recovered from its Dirichlet spectrum. In
particular, it is known since the 1980s~\cite{Gordon} that there are polygons that are isospectral but not isometric. The question
of whether there are any smooth or convex planar domains with this
property remains open. In the case
of Riemannian manifolds it is known that even the local geometry of isospectral
manifolds can be different~\cite{Szabo}. 

The list of positive results for the inverse spectral problem remains
rather short. In the case of negatively curved manifolds, there are
spectral rigidity~\cite{Croke,Guillemin} and compactness
results~\cite{Sarnak}, some of which have been recently extended to
Anosov surfaces~\cite{Uhlmann}. These compactness results are also
valid for planar domains~\cite{Sarnak2,Sarnak3}. A major
breakthrough has been the proof that, in the class of analytic bounded planar
domains with a  reflection symmetry, any domain is spectrally
determined~\cite{Zelditch}. This result, which hinges on the computation of all the
wave invariants at a closed billiard trajectory formed by a segment
that hits the boundary orthogonally at both endpoints, has been
extended to higher dimensional domains in~\cite{Zelditch2}. 

It is very easy to show~\cite{Kac} that the any planar domain with the
same Dirichlet spectrum as a disk must be congruent to it. This
follows from the facts that disks are the only minimizers of the
isoperimetric inequality
\[
|\pd\Om|^2\geq 4\pi |\Om|
\]
and that the area and length are obtained from the short-time asymptotics of the
heat trace
\begin{equation}\label{hOm}
h\Dir_\Om(t):=\sum_{k=1}^\infty e^{-\la_k t}\,.
\end{equation}
The result remains true for balls of any dimension and for Neumann
boundary conditions. Quite
remarkably, the somewhat reminiscent problem of the spectral determination of round spheres is only known
up to dimension~6~\cite{Tanno}. Other than the disks, the only planar
domain that is known to be spectrally determined without any symmetry
and analyticity assumptions is a family of ovals with four vertices
whose isoperimetric quotients are very close to that of a disk~\cite{Watanabe}.

Our objective in this paper is show that regular polygons (that is,
those whose angles are all equal and whose sides are all of the same length) are also
determined by their Dirichlet or Neumann spectrum in the class of
convex piecewise smooth domains. More generally, the
result holds for the polygons that we will call {\em semi-regular}\/, meaning circumscriptible polygons whose angles are all
equal but possibly one, which must then be larger than the rest:

\begin{theorem}\label{T.main}
  Let $P_n$ be a semi-regular $n$-sided polygon. If $\Om$ is a convex piecewise smooth bounded planar domain, possibly
  with straight corners, and its Dirichlet or Neumann spectrum
  coincides with that of~$P_n$, then $\Om$ is congruent to~$P_n$.
\end{theorem}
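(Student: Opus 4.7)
My plan is based on the short-time asymptotics of the heat trace. For a convex piecewise smooth domain with straight corners one has the expansion
\[
h\Dir_\Om(t) = \frac{|\Om|}{4\pi t} - \frac{|\pa\Om|}{8\sqrt{\pi t}} + K(\Om) + \sum_{k\geq 1} a_k(\Om)\, t^{k/2},
\]
where $K(\Om) = \frac{1}{12\pi}\int_{\pa\Om_{\mathrm{sm}}}\kappa\, ds + \sum_i f(\alpha_i)$ with $f(\alpha) := (\pi^2-\alpha^2)/(24\pi\alpha)$, and the sum runs over the corners of $\Omega$ with interior angles $\alpha_i$ (the Neumann case being analogous, with sign changes in some coefficients). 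Isospectrality of $\Om$ with $P_n$ forces every coefficient to agree; in particular $|\Om|=|P_n|$, $|\pa\Om|=|\pa P_n|$, $K(\Om)=K(P_n)$, and $a_k(\Om)=a_k(P_n)$ for every $k\geq 1$.

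The first step would be to use the higher-order coefficients $a_k$ to show that $\Om$ is itself a polygon. Since the smooth part of $\pa P_n$ has vanishing curvature, the $a_k(P_n)$ are concentrated at the corners; for a generic convex $\Om$ the same coefficients pick up nonnegative integrals of the form $\int_{\pa\Om_{\mathrm{sm}}}\kappa^{j}\, ds$ on the smooth boundary, so matching them order by order should force $\kappa\equiv 0$ there and hence $\pa\Om$ piecewise linear. Writing $\Om$ as a convex $m$-gon with angles $\alpha_i$ and sides $l_i$, the remaining constraints are $\sum\alpha_i=(m-2)\pi$, $\sum f(\alpha_i)=\sum_j f(\alpha_j^{P_n})$, $\sum l_i=|\pa P_n|$ and $|\Om|=|P_n|$. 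The classical isoperimetric inequality for polygons with prescribed angles, $|\Om|\leq|\pa\Om|^2/(4\sum\cot(\alpha_i/2))$ with equality iff $\Om$ is tangential, combined with the circumscriptibility of $P_n$, then yields $\sum\cot(\alpha_i^\Om/2)\leq\sum_j\cot(\alpha_j^{P_n}/2)$, with equality forcing $\Om$ to be tangential as well.

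The crucial final step is to combine the functional identity $\sum f(\alpha_i)=\text{const}$ and inequality $\sum\cot(\alpha_i/2)\leq\text{const}$ with the linear constraint $\sum\alpha_i=(m-2)\pi$ on $\alpha_i\in(0,\pi)$ to show that $m=n$ and the angle multiset of $\Om$ coincides with that of $P_n$. A Lagrange-multiplier analysis, exploiting the convexity of the relevant one-variable functions on $(0,\pi)$, should reveal that any admissible angle distribution takes at most two distinct values, after which the semi-regular $(n-1,1)$-pattern of $P_n$ is selected uniquely. Once the angle multiset is pinned down, tangentiality together with the perimeter constraint reconstructs $\Om$ as congruent to $P_n$. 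I expect the main technical obstacles to be this last convex-analytic uniqueness step and the rigorous control, to all orders, of the heat-trace expansion at straight corners.
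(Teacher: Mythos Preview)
Your plan matches the paper's strategy almost exactly: reduce to polygons via the heat-trace expansion, invoke L'Huilier's inequality to pass to circumscriptible polygons, and then run a Lagrange-multiplier argument (using that the critical angles take at most two values, as in Grieser--Maronna) to pin down the semi-regular configuration uniquely. Two small clarifications: for the first step you do not need the expansion ``to all orders''---the single coefficient of $\sqrt t$, which equals a positive multiple of $\int_{\pa\Om}\kappa^2\,ds$ (and is known for domains with straight corners), already forces $\kappa\equiv 0$; and for the final uniqueness step the paper organises the argument as an induction on the number of sides, with the base case handled by a Cirtoaje-type inequality and the inductive step requiring a fairly delicate two-variable analysis, so your expectation that this is the main technical obstacle is accurate.
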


Of course, a planar domain is said to be {\em piecewise smooth}\/ if
its boundary is a $C^\infty$~curve but possibly at a finite number of
points, which we call {\em corner points}\/. We assume that the
corners are {\em straight}\/, i.e., that the boundary is flat in a
small neighborhood of each corner point. 
As a
side remark, notice that, when considering the inverse spectral
problem for regular polygons, it is essential to allow for piecewise
smooth domains because a domain with corners cannot be isospectral to
a domain with smooth boundary~\cite{Rowlett}.


Let us discuss the main ideas of the proof of the theorem in the case
of Dirichlet boundary conditions, the Neumann case being completely
analogous. The key
step of the proof is to prove the result for convex polygons, that is, to show that if
$P$~is a polygon isospectral to an $n$-sided semi-regular polygon~$P_n$, then
$P$ and $P_n$ are congruent. The reason is that the asymptotic
behavior at~0 of the heat trace~\eqref{hOm} of a piecewise
domain~$\Om$ with $m$ straight corners is, as shown in
Proposition~\ref{P.heat} using results available in the literature,
\begin{equation*}
h_\Om\Dir(t)=\frac{|\Om|}{4\pi t} -\frac{|\pd\Om|}{8\sqrt{\pi
    t}}
+\frac1{12\pi}\bigg(\int_{\pd\Om}\ka\,ds+\sum_{k=1}^m\frac{\pi^2-\te_k^2}{2\te_k}\bigg)
+\frac{\sqrt t}{256\sqrt\pi} \int_{\pd\Om}\ka^2\, ds + O(t)\,,
\end{equation*}
where $\te_k\in(0,2\pi)$ is the interior angle at the $j\th$ corner
point and $\ka$ is the curvature of the boundary at each
differentiable point. From the expression of the term of order
$\sqrt t$ it is not hard to see that the boundary of~$\Om$ must be
flat, which means that $\Om$ must indeed be a polygon. Notice that the
only reason for which we have made the technical assumption that the
corners of~$\Om$ are straight is simply that the fourth coefficient of
the asymptotic expansion of the heat trace has apparently not yet been
computed for domains with non-straight corners. In the case of
straight corners and Dirichlet boundary boundary conditions, the
result was first obtained (but not published) by Dan Ray, as
referenced by Kac~\cite{Kac} and later by Cheeger~\cite{Cheeger}, while for
Neumann boundary conditions the result has appeared only very recently~\cite{Mazzeo}.

To establish the result for polygons we derive a new characterization
of semi-regular polygons as extremizers (in the space of convex polygons
whose number of sides is not fixed) of a kind of constrained
isoperimetric inequality which, to our best knowledge, had not
appeared in the literature before. The constraint involves a third
geometric quantity that is a spectral invariant and whose expression,
which is highly nontrivial from a geometric standpoint, can only be
guessed using spectral-theoretic methods. Let us recall that an
$n$-sided regular polygon is the $n$-gon with fixed area and minimal
perimeter, but regular polygons are {\em not}\/ extremizers of the
isoperimetric inequality when one allows for polygons with an
arbitrary number of sides, and one does not know how to find the
number of sides of a polygon using spectral invariants. Hence the key
advantage of our isoperimetric-type characterization of regular (and,
more generally, semi-regular)
polygons is that it enables us to characterize an $n$-sided semi-regular
polygon among polygons with an arbitrary number of sides. It should be
noticed that the hypothesis that the domains are convex is only used
in this part of the proof.

Incidentally, let us recall that, for a fixed number of sides,
the problem becomes effectively finite-dimensional, so finer results
can sometimes be proved. For example, it is easy to prove that any
rectangle is characterized among rectangles by its spectrum. The
same result is true in the case of triangles, but the proof is much
harder (see~\cite{Durso} for the original proof based on wave
invariants and~\cite{Grieser} for a simpler proof that only needs the
heat kernel). Likewise, trapezoids have been recently
shown~\cite{Rowlett2} to be determined by their Neumann spectrum within the class
of trapezoids. The proof makes essential use of wave invariants to
obtain an additional geometric quantity, which does not appear in the
heat trace and is crucially used to determine the parameters that
characterize an arbitrary trapezoid.

The paper consists of two sections corresponding to the two parts of
the proof that we have described above. The proof that any polygon
isospectral with a semi-regular polygon is indeed congruent to it,
which is the heart of the paper, is presented in
Section~\ref{S.polygons}. The proof of Theorem~\ref{T.main} then
follows after showing that any domain isospectral with a polygon must
be a polygon, which is established in Section~\ref{S.strategy}.

\section{Semi-Regular polygons as maximizers}
\label{S.polygons}

Our objective in this section is to show that any $m$-sided
polygon~$P$ that is isometric to an $n$-sided semi-regular polygon is in
fact congruent to it. 

For this we will rely on the well known fact (see~\cite{Berg} for a
transparent derivation) that the asymptotic behavior of the
heat kernel of a polygon~$P$ can be completely characterized modulo exponentially small
terms as
\[
h_P\Dir(t)=\frac{|P|}{4\pi t} -\frac{|\pd P|}{8\sqrt{\pi
    t}}
+\frac{S(P)}{24\pi}+ O(e^{-c/t})\,,
\]
where $|P|$ and $|\pd P|$ respectively denote the area and
perimeter of~$P$ and
\[
S(P):=\sum_{k=1}^m\frac{\pi^2-\te_k^2}{\te_k}\,.
\]
In the Neumann case, where the heat trace is defined as
\[
h\Neu(t):=\sum_{k=1}^\infty e^{-\mu_k t}\,,
\]
the formulas are identical modulo a sign reversal~\cite{Mazzeo}:
\[
h_P\Neu(t)=\frac{|P|}{4\pi t} +\frac{|\pd P|}{8\sqrt{\pi
    t}}
+\frac{S(P)}{24\pi}+ O(e^{-c/t})\,,
\]

Hence our goal in this section is to characterize semi-regular polygons as the maximizers
of a functional that one can construct using only the three quantities
that appear in the heat trace asymptotics of a polygon: the area, the
length and the function $S(P)$. Furthermore, we can use dilations to restrict our attention to
polygons with unit perimeter. This leaves us just the area $|P|$
and the function $S(P)$, and we will define the maximization problem
as follows:

\begin{defi}
We will say that a convex polygon $P$ with unit perimeter is a {\em
  maximizer}\/ if for every convex polygon $Q$ with unit perimeter and $S(Q)=S(P)$ one has
$|Q|\leq |P|$.
\end{defi}

\begin{figure}
\centering
\includegraphics[scale=0.4]{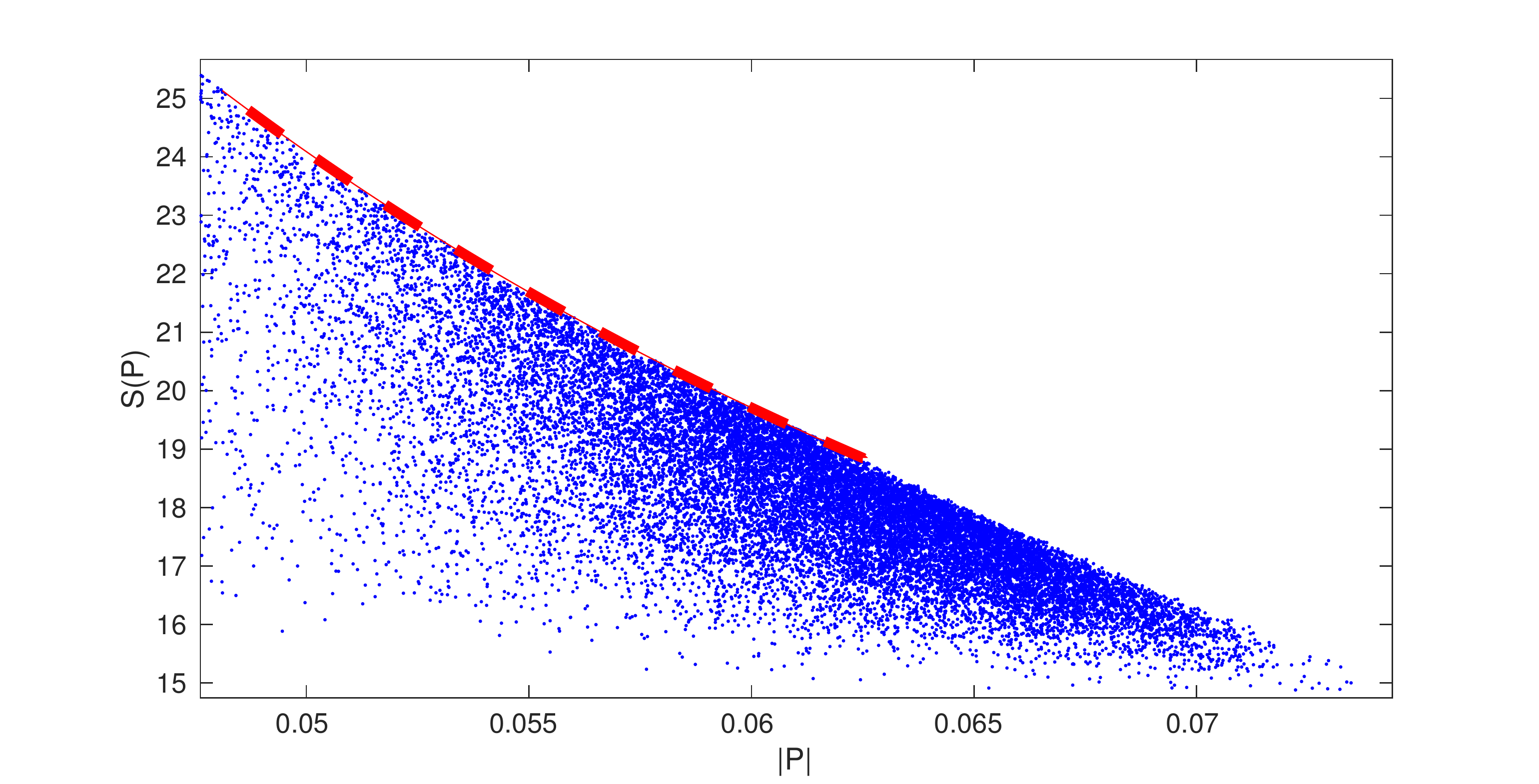}
\caption{Plot of the quantities $(|P|,S(P))$ for 20,000 randomly generated
  convex polygons of unit perimeter. The dashed line corresponds to the semiregular 4-gons, the endpoints being the equilateral triangle and the square.}
\label{F.Semiregular}
\end{figure}

Notice that a maximizer is an extremizer of the isoperimetric quotient
among polygons subject to the constraint that the value of the
function~$S$ is fixed. The key ingredient in the proof of
Theorem~\ref{T.main} is the following result, which shows that
semi-regular polygons are the only maximizers. For the benefit of the reader, before presenting the statement and
proof of this result it is worth to illustrate this
result with some numerical results to get some intuition. In Figure~\ref{F.Semiregular} we
have plotted the values $(S(P),|P|)$ for 20,000 randomly generated
convex polygons of unit perimeter that we have obtained using Monte
Carlo simulations. On top of this
we have plotted the curve $(S(P),|P|)$ corresponding to the semi-regular
polygons.

\begin{theorem}\label{T.maximizers}
A convex $n$-sided polygon of unit perimeter is a maximizer if and only if it is
circumscriptible and all its angles are equal but possibly one, which
must then be larger than the rest. Moreover, if
$P$ and $Q$ are two non-congruent polygons as above, possibly with a different
number of sides, then $S(P)\neq S(Q)$.
\end{theorem}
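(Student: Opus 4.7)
\emph{Proof plan.} The strategy is to first reduce the maximization problem to a finite-dimensional constrained optimization over vertex angles, and then to solve that optimization by a Lagrange-multiplier argument together with a second-order condition. For the first step, any maximizer must be tangential (i.e.\ circumscribed about a circle): for a fixed cyclic sequence of vertex angles $\theta_1,\dots,\theta_m$ summing to $(m-2)\pi$, the classical tangential isoperimetric inequality asserts that among convex polygons with these angles in this order and given perimeter, the area is maximized precisely by the tangential polygon. For such a polygon with inradius $r$ one has $|\pd P|=2r\sum_k\cot(\theta_k/2)$ and $|P|=r|\pd P|/2$, so the unit-perimeter normalization gives
\[
|P|=\frac{1}{4\sum_{k=1}^m\cot(\theta_k/2)}.
\]
Combined with the identity $S(P)=\pi^2\sum_k 1/\theta_k-(m-2)\pi$ (which uses $\sum_k\theta_k=(m-2)\pi$), for fixed $m$ the problem reduces to minimizing $F(\theta):=\sum_k\cot(\theta_k/2)$ over $\theta\in(0,\pi)^m$ subject to $\sum_k\theta_k=(m-2)\pi$ and $\sum_k 1/\theta_k=C$, where $C$ encodes the prescribed value of $S$.

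\emph{Lagrange and second-order analysis.} At any critical point of $F$ the first-order conditions give constants $\alpha,\beta$ with every $\theta_k$ solving
\[
G(\theta):=\frac{1}{2\sin^2(\theta/2)}-\frac{\beta}{\theta^2}=-\alpha.
\]
The crucial step is to prove that $G=\mathrm{const}$ has at most two solutions in $(0,\pi)$, or equivalently that $G'(\theta)=0$ has at most one solution; the latter, after solving for $\beta$, is equivalent to $\beta=\chi(\theta):=\theta^3\cos(\theta/2)/(4\sin^3(\theta/2))$, so it suffices to show that $\chi$ is strictly monotone on $(0,\pi)$. A logarithmic differentiation reduces monotonicity of $\chi$ to the trigonometric inequality $\tan(\theta/2)+3\cot(\theta/2)>6/\theta$ on $(0,\pi)$, which I would verify by a Taylor expansion near $\theta=0$ (the first non-vanishing contribution is a positive multiple of $\theta^3$) together with a sign analysis of the derivative of the difference elsewhere in the interval. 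Knowing that at most two distinct angle values $\theta_s<\theta_\ell$ occur, with multiplicities $k$ and $m-k$, a second-order argument then forces $m-k\leq 1$: the Hessian of the Lagrangian is diagonal with entries $-G'(\theta_i)$, positive at $\theta_s$ (which lies to the left of the minimum of $G$) and negative at $\theta_\ell$, so if $m-k\geq 2$, perturbing two of the $\theta_\ell$-angles by $+\epsilon$ and $-\epsilon$ automatically satisfies both linear constraints (since the perturbed angles share the common denominator $\theta_\ell^2$) while producing a strictly negative second-order variation along the manifold, contradicting local minimality. Hence the odd angle, if any, is unique and necessarily the larger one, which is exactly the semi-regular structure.

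\emph{Injectivity and main obstacle.} For the second assertion, for each $m\geq 3$ the semi-regular $m$-gons form a one-parameter family parameterized by the large angle $\theta'\in[(m-2)\pi/m,\pi)$, interpolating between the regular $m$-gon (at $\theta'=(m-2)\pi/m$) and the regular $(m-1)$-gon (as $\theta'\to\pi$). A direct differentiation gives $dS/d\theta'=\pi^2(1/\theta^2-1/\theta'^2)>0$, so $S$ is strictly monotone in $\theta'$ along each family, and the families match continuously at the regular polygons into a single strictly monotone curve covering $S\in(4\pi,\infty)$, which yields the injectivity. I expect the most delicate step to be the global monotonicity of $\chi$ on $(0,\pi)$, since it underlies the whole Lagrange analysis; the remaining second-order elimination and injectivity claim are then concrete calculations once this ingredient is in place.
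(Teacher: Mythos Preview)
Your approach diverges significantly from the paper's after the common setup (reduction to tangential polygons via L'Huilier, Lagrange conditions, and the at-most-two-angle-values lemma). Where the paper proceeds by induction on the number of sides---with a base case relying on Cirtoaje's equal-variable inequality and an inductive step that passes to a two-variable problem $\mathrm{P}'_{s,n}$ analyzed through several pages of explicit computation (Proposition~\ref{propderivadaGpositiva} and Lemmas~\ref{lemmaT1T2}--\ref{lemma2})---you instead invoke the second-order necessary condition directly: since the Hessian of the Lagrangian is diagonal with entry $-G'(\theta_\ell)<0$ at the larger angle, any two coordinates sitting at $\theta_\ell$ give a tangent direction to both constraints along which the Lagrangian strictly decreases, so no local minimizer of $F$ can have multiplicity $\geq 2$ at $\theta_\ell$. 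This is correct (one checks that two distinct roots of $G=\mathrm{const}$ force $\beta\in(0,2)$, whence $G$ has a unique interior minimum and the sign claims follow) and considerably more conceptual than the paper's route; it also treats the base case and the inductive step uniformly, so Cirtoaje's inequality is not needed.

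There is, however, a genuine gap. Your argument characterizes interior \emph{local} minima of $F$ for a \emph{fixed} number of sides $m$, but the theorem concerns the \emph{global} maximizer over all convex polygons with an arbitrary number of sides. You still need (i) existence of a global extremizer, (ii) an argument that it lies in the interior of some $m$-constraint set rather than on the boundary $\theta_i\to\pi$, which is precisely where different values of $m$ are glued together, and (iii) a comparison across $m$. This is exactly what the paper's induction supplies. The gap can be closed: the constraint $\sum 1/\theta_k=C$ bounds $m$ from above (by AM--HM, $m^2/((m-2)\pi)\leq C$), and for each feasible $m$ the constraint set is compact in $[1/C,\pi]^m$; a global maximizer therefore exists, and if it has an angle equal to $\pi$ one removes that vertex and drops to $m-1$ until one lands in the interior of the $m^*$-problem, where your second-order argument applies. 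Without some such argument the conclusion does not follow from what you wrote. (A minor slip: only $\sum\theta_k$ is a linear constraint; $\sum 1/\theta_k$ is not, but the tangent-space statement you actually use is correct.)
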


\begin{proof}
We will show that if a polygon $P$ is a maximizer, then it needs to be
of that form, and that the maximizer is completely determined by the
value of the function~$S$.

The first observation is that while looking for maximizers we can
restrict ourselves to the class of circumscriptible polygons. This is
by a classical result of L'Huilier~\cite{LHuilier}, extended to the
case of polygons on the sphere by Steiner~\cite{Steiner}, which
asserts that among all convex $n$-sided polygons of unit perimeter
with given angles, only the one circumscribed to a circle has the
largest area. 

We can therefore write all the quantities in terms of
the interior angles of the polygon
$\theta_1, \theta_2, \ldots, \theta_n$, now assumed to be
circumscribed to a circle (see Figure \ref{F.Circumscriptible}), which yields the following formulas after
imposing that $|\pd P|=1$:

\begin{align*}
S(P) = \sum_{k=1}^n\frac{\pi^2-\te_k^2}{\te_k}, \quad
|P| = \frac{1}{4}\left(\sum_{k=1}^n\cot\frac{\te_k}{2}\right)^{-1}
\end{align*}

\begin{figure}
\centering
\includegraphics[scale=0.4]{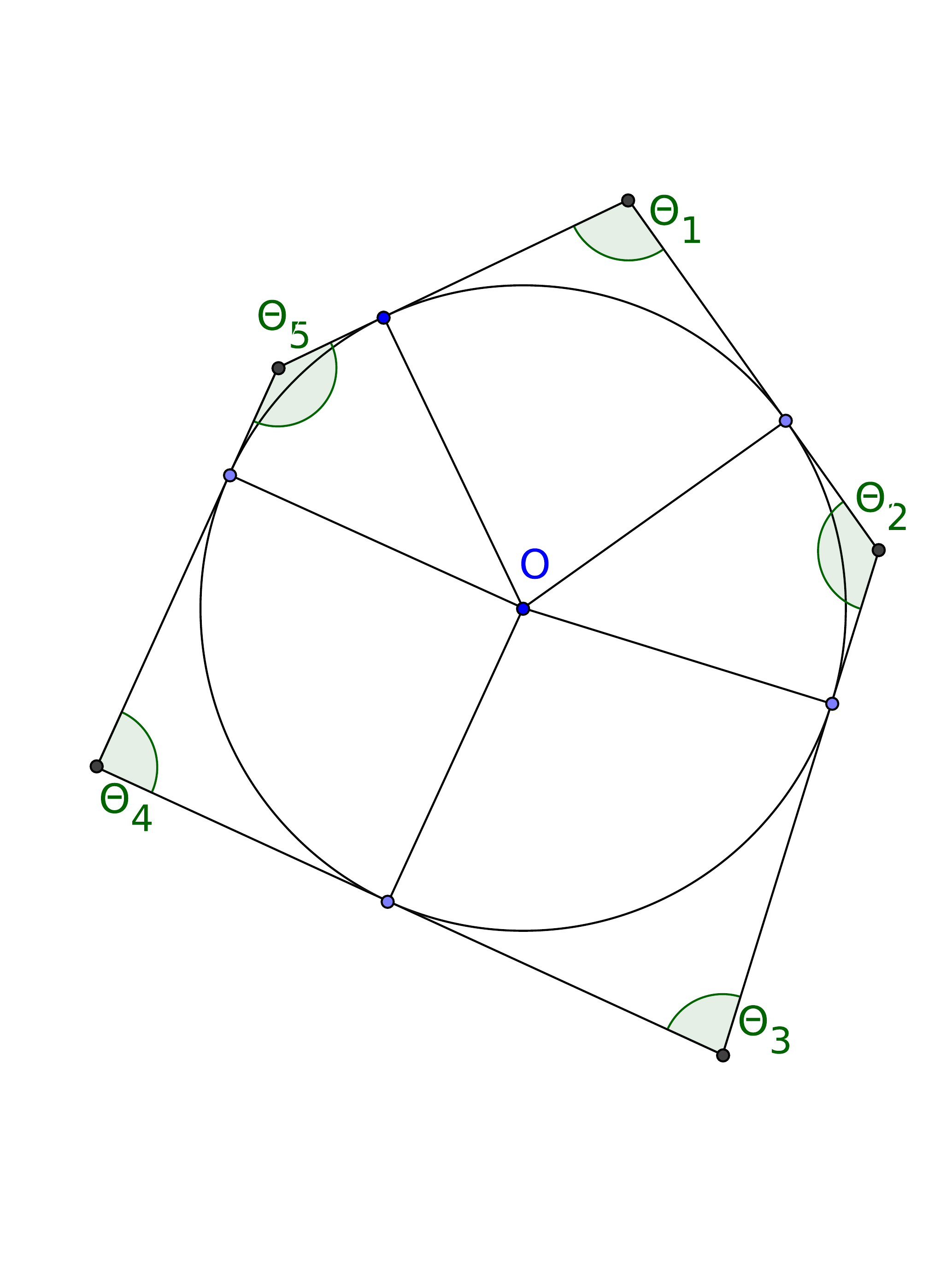}
\caption{A circumscriptible pentagon}
\label{F.Circumscriptible}
\end{figure}

We want to fix the value of $S(P)$, 
which for convenience we will denote by $4\pi\frac{s-1}{s-2}$ with $s
> 2$. The reason for choosing this normalization is that the value of the function~$S$ on the regular $n$-sided
polygon is precisely $4\pi\frac{n-1}{n-2}$. We also know
 that the sum of the angles
of an $n$-sided polygon must satisfy
\begin{equation}\label{cond1}
\sum_{i=1}^{n}\theta_i  =  (n-2)\pi.
\end{equation}
We will consider the following problem, where the unknown is the set of numbers $(\te_1,\dots, \te_n)\in
(0,\pi)^n$ subject to the constraints that correspond to setting
\[
S(P)=4\pi\frac{s-1}{s-2}
\]
and imposing the geometric condition~\eqref{cond1}:

\begin{problemA}
Maximize the quantity
\begin{align*}
\left(\sum_{k=1}^n\cot\frac{\te_k}{2}\right)^{-1}
\end{align*}
with $(\te_1,\dots, \te_n)\in (0,\pi)^n$ subject to the constraints
that
\begin{align*}
\sum_{k=1}^n\frac{1}{\te_k}  =  \frac{1}{\pi}\left(n+\frac{2s}{s-2}\right) \,,\qquad \sum_{i=1}^{n}\theta_i  =  (n-2)\pi\,.
\end{align*}
\end{problemA}

We will show that there is a unique solution to this problem, which
defines a circumscriptible polygon of $\lceil s \rceil$ sides whose
angles are all equal but possibly one, which is then larger than the
rest. To do so, we proceed by induction in $n$. 

We start doing the base case $n = \lceil s \rceil$, as $n$ cannot be
smaller than $\lceil s \rceil$ because of the isoperimetric
inequality for polygons. Using the inequality~\cite[Remark 1.4,
Corollary 1.6]{Cirtoaje}, all we have to do in this case is to check
that the function
\begin{equation*}
g(x):= f'(x^{-\frac12})
\end{equation*}
is concave in the interval
$\left(\pi^{-2},\infty\right)$, where
$$
f(x) := \cot\frac{x}{2}\,.
$$
Since
\begin{align*}
g(x)= \frac{1}{\cos x^{-\frac12}-1}\,,
\end{align*}
its second derivative is
\begin{align*}
g''(x) = 
- \frac{2+\cos x^{-\frac12}
  - 3 x^{\frac12}\sin x^{-\frac12}}{16 x^3\sin^4(\frac12x^{-\frac12})}.
\end{align*}
Thus to prove that the function~$g$ is concave it suffices to
show that
\begin{align*}
z\,(2 + \cos z) - 3\sin z > 0
\end{align*}
for all $z\in(0,\pi)$.

Using a sixth order Taylor expansion one readily sees that, for all
$z\in (0,\pi)$,
\begin{align*}
z\, (2+\cos z) &= z\left(3-\frac{z^{2}}{2} + \frac{z^4}{24} -
                  \frac{z^{6}}{720} +\frac1{5040}\int_0^z (z-\zeta)^6
  \sin \zeta\, d\zeta \right) \\
&> z\left(3-\frac{z^{2}}{2} + \frac{z^4}{24} -
                  \frac{z^{6}}{720} \right) \,,
\end{align*}
and by a similar argument
\[
3\sin z  < 3z - \frac{z^{3}}{2} + \frac{z^{5}}{40} 
\]
on $(0,\pi)$. This ensures that, for all $z$ in $(0,\pi)$,
\begin{align*}
z\, (2+\cos z) - 3\sin z & > \frac{z^{5}}{60} - \frac{z^{7}}{720},
\end{align*}
which is strictly positive on this interval. The case $n = \lceil
s \rceil$ then follows. 

We now move on to establish the inductive step. We can therefore take
some~$n$, assume that the
only maximizer of Problem P$_{s,n'}$ for any $n' < n$ is the
aforementioned polygon, and show that the same remains true for Problem~P$_{s,n}$. To this end we shall next show that we can reduce Problem~P$_{s,n}$ to a problem
of the form P$_{s,n'}$ with $n' < n$. 

Using Lagrange multipliers, we need to minimize the function
\begin{multline*}
F(\theta_1,\ldots,\theta_n,\lambda_1,\lambda_2) :=
\sum_{k=1}^n\cot\frac{\te_k}{2} +
\lambda_1\left(\sum_{k=1}^n\frac{1}{\te_k}  -\frac{n}{\pi}-
  \frac{2s}{\pi(s-2)}\right) \\
+ \lambda_2\left(\sum_{i=1}^{n}\theta_i  -  (n-2)\pi\right)
\end{multline*}
whose minima for fixed~$(\la_1,\la_2)$ and $(\te_1,\dots,\te_n)\in
(0,\pi)^n$ must be either attained at interior critical points or lie
on the boundary. 

We first deal with the boundary case. There are two possibilities:
either $\theta_i = 0$ for some $i$ or $\theta_i = \pi$ for some
$i$. In the former case the function $F$ will go to $+\infty$ and thus
it will not be
a minimum. In the latter, having a minimum at a point of the form 
\begin{equation}\label{valepi}
(\theta_1,\ldots,\theta_{i-1},\pi,\te_{i+1}, \ldots, \theta_n)
\end{equation}
of Problem~P$_{s,n}$ is equivalent to have a minimum
$(\theta_1,\ldots,\theta_{i-1},\theta_{i+1}, \ldots, \theta_n)$ of
Problem~P$_{s,n-1}$, which is ruled out by the induction hypothesis. 

Therefore, it boils down to controlling the interior critical points
of the function~$F$. Taking the partial derivative with respect to
$\theta_i$, we infer that if there is a local minimum
at~$(\te_1,\dots,\te_n)$ then one necessarily has, for $1\leq
i\leq n$,
\begin{align*}
0 = \frac{\partial F}{\partial \theta_i}(\te_1,\dots,\te_n,\la_1,\la_2) = \Phi(\te_i,\la_1,\la_2)\,,
\end{align*}
where
\[
\Phi(z,\la_1,\la_2):= -\frac{1}{2\sin^{2}\frac{z}{2}} - \frac{\lambda_1}{z^{2}} + \lambda_2\,.
\]

The key observation here is that one gets the same equation for all
the angles, which only depends on one of the variables at a time and
on the Lagrange parameters. Furthermore, it was proved in~\cite[Lemma
1(b)]{Grieser} that the function $\Phi(\cdot,\la_1,\la_2)$ has at most
two zeros on the interval~$(0,\pi)$ for any value
of~$(\la_1,\la_2)$. This implies that if $(\te_1,\dots,\te_n)$ is a critical point then the angles~$\te_i$ can take at most two distinct values. 

Let us assume that the two distinct values are $\bar\te_1$ and
$\bar\te_2$ and that there are $k$ and $n-k$ copies of each angle
respectively. Problem P$_{s,n}$ is reduced to the following:

\begin{problemB}
Maximize the function
\begin{align*}
\left(k\cot\frac{\bar\te_1}{2}+(n-k)\cot\frac{\bar\te_2}{2}\right)^{-1}
\end{align*}
with $(\bar\te_1,\bar\te_2)\in (0,\pi)^2$ subject to the constraints
that
\begin{align*}
\frac{k}{\bar\te_1} + \frac{n-k}{\bar\te_2}  =  \frac{1}{\pi}\left(n+\frac{2s}{s-2}\right) \,,\quad k \bar\te_1 + (n-k) \bar\te_2  =  (n-2)\pi
\end{align*}
\end{problemB}

Solving the constraints for $(\bar\te_1, \bar\te_2)$ in terms of
$(k,n)$ and plugging the resulting expressions into the function we want to maximize  yields
very complicated formulas, so instead we will solve for $(k,n)$ as
functions of $(\bar\te_1,\bar\te_2)$. This is much simpler
because the constraints depend linearly on~$k$ and~$n$, so we get
\begin{align*}
n(s,\bar\te_1,\bar\te_2) &= \frac{2(s-2)\pi^2 - 2\bar\te_1\bar\te_2s}{(s-2)(\pi-\bar\te_1)(\pi-\bar\te_2)}\,, \\
k(s,\bar\te_1,\bar\te_2) &= \frac{2\bar\te_1(\bar\te_2 s - (s-2)\pi)}{(s-2)(\bar\te_2-\bar\te_1)(\pi-\bar\te_1)}\,.
\end{align*}
\color{black}
This allows us to get rid of the constraints and minimize the objective function
\begin{align*}
G_s(\bar\te_1,\bar\te_2) := k(s,\bar\te_1,\bar\te_2)\cot\frac{\bar\te_1}{2}+(n(s,\bar\te_1,\bar\te_2)-k(s,\bar\te_1,\bar\te_2))\cot\frac{\bar\te_2}{2}
\end{align*}
with $(\bar\te_1,\bar\te_2)\in (0, \pi)^2$. 

We first argue that $k$ and $n-k $ must be both larger than or equal
to~1. Indeed, if one assumes that $k = 0$, that implies that
$\bar\te_2 = \frac{n-2}{n}\pi$, with solution $n = s$ whenever $s$ is
an integer and no solution otherwise. This case is discarded because we are assuming that $n \geq \lceil
s \rceil +1$. A similar argument shows that $n-k\geq1$.

Because of the symmetries of the problem (that is, the fact that
exchanging $\bar\te_1$ for $\bar\te_2$ is equivalent to exchanging $k$
for $n-k$), it is enough to consider the case $\bar\te_1 \geq \bar\te_2$. The next
step is to reduce the region where critical points can lie. First, we
notice that $k(s,\bar\te_1,\bar\te_2) \geq 0$ if and only if
\begin{equation}\label{rest1}
\bar\te_2 \leq \frac{s-2}{s}\pi\,,
\end{equation}
and straightforward calculations
show that
$$
k(s,\bar\te_1,\bar\te_2) \leq n(s,\bar\te_1,\bar\te_2)
$$
if and only if 
\begin{equation}\label{rest2}
\bar\te_1 \geq \frac{s-2}{s}\pi\,.
\end{equation}
If we further impose the condition 
$$
n(s,\bar\te_1,\bar\te_2) \geq s + 1\,,
$$
a short computation shows that
\begin{equation}\label{rest3}
\bar\te_1\geq h(s,\bar\te_2)\,,
\end{equation}
where
\begin{align*}
h(s,\bar\te_2)&:=  \frac{\pi (s-2)(\pi(s-1)-\bar\te_2(1+s))}{2\bar\te_2 + (s+1)(\pi(s-2)-s\bar\te_2)} \,.
\end{align*}
\color{black}
Notice that $h(s,\bar\te_2)$ is
an increasing function of $\bar\te_2$ in the
interval~$\left[0,\frac{s-2}{s}\pi\right]$, so this readily gives the uniform bound
\begin{align*}
\bar\te_1 \geq h(s,0) = \frac{s-1}{s+1}\pi
\end{align*}

Due to the constraints on~$(\bar\te_1,\bar\te_2)$ imposed
by~\eqref{rest1}--\eqref{rest3}, we will look for maximizers of
Problem~P$_{s,n}'$ in the bounded planar domain~$\cS_s$ whose
boundary is given by the segments
\begin{align*}
\Gamma^{\cS_s}_{1} & := \bigg\{\bar\te_1 = \frac{(s^2-s-2)\pi}{ s^2+s-2} - \frac{4 \pi^2 ( s -2)}{(s^2+s-2) (\bar\te_2(s^2+s-2) + \pi(-s^2+s+2))}, \\
&\qquad \qquad \qquad \qquad\qquad \qquad \qquad \qquad \qquad\qquad \qquad \qquad \qquad\quad\;\bar\te_2 \in \left[0,\frac{s-2}{s}\pi\right]\bigg\} \\[1mm]
\Gamma^{\cS_s}_{2} &  := \left\{\bar\te_1 \in
                   \left[\frac{s-1}{s+1}\pi,\; \pi\right], \bar\te_2 = 0\right\}, \quad
\Gamma^{\cS_s}_{3} := \left\{\bar\te_1 = \pi, \;\left[0,\frac{s-2}{s}\pi\right]\right\}.
\end{align*}
\color{black}
For computational purposes, it is often convenient to consider the larger
region $\cR_s\supset\cS_s$ (see Figure~\ref{F.Gamma}) that is bounded by the segments
\begin{align*}
\Gamma^{\cR_s}_{1} & := \left\{\bar\te_1 = \frac{s-1}{s+1}\pi, \;\bar\te_2 \in \left[0,\frac{s-2}{s}\pi\right]\right\}, \quad
\Gamma^{\cR_s}_{2}  := \left\{\bar\te_1 \in \left[\frac{s-1}{s+1}\pi,\pi\right],\; \bar\te_2 = 0\right\} \\[1mm]
\Gamma^{\cR_s}_{3} & := \left\{\bar\te_1 = \pi, \;\bar\te_{2} \in \left[0,\frac{s-2}{s}\pi\right]\right\}, \quad
\Gamma^{\cR_s}_{4}  := \left\{\bar\te_1 \in \left[\frac{s-1}{s+1}\pi,\;
                   \pi\right], \bar\te_2 = \frac{s-2}{s}\pi\right\}\,.
\end{align*}

\begin{figure}
\centering
\includegraphics[scale=0.375]{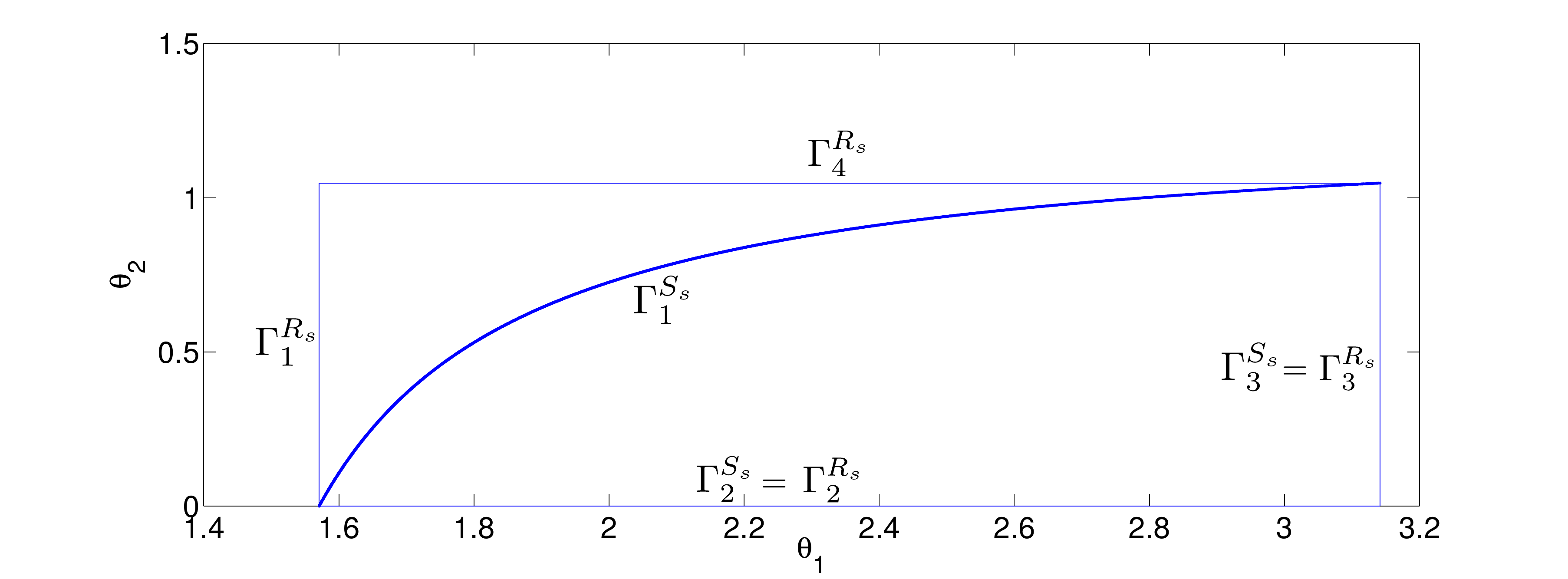}
\caption{The regions $\cR_{3}$ and $\cS_3$}
\label{F.Gamma}
\end{figure}

We start by showing that the function~$G_s$ does not have any critical points in the
interior of~$\cR_s$. The key fact is contained in the following
proposition, whose rather involved proof contains two auxiliary lemmas:

\begin{proposition}\label{propderivadaGpositiva}
The derivative of the function~$G_s$ with respect to the first
variable satisfies
\begin{align*}
\pa_{\bar\te_1} G_s(\bar\te_1,\bar\te_2) > 0&\qquad  \text{in the
                                              interior of }\cR_s\,,\\
\pa_{\bar\te_1} G_s(\bar\te_1,\bar\te_2) > 0&\qquad  \text{on }\Gamma^{\cR_s}_{2}\,, \\
\pa_{\bar\te_1} G_s(\bar\te_1,\bar\te_2) = 0&\qquad  \text{on }\Gamma^{\cR_s}_{4}\,.
\end{align*}
\end{proposition}

\begin{proof}
Taking the derivative yields
\begin{align*}
\pa_{\bar\te_1} G_s(\bar\te_1,\bar\te_2) & =
\frac{ (s-2) \pi -\bar\te_2 s}{(\bar\te_1 - \bar\te_2)^2 (s-2)
                                            (\pi -
                                           \bar\te_2)} \, \widetilde{G}_s(\bar\te_1,\bar\te_2)\,,
\end{align*}
where 
\begin{multline*}
 \widetilde{G}_s(\bar\te_1,\bar\te_2)  := \frac{1}{(\pi-\bar\te_1)^2}
\bigg[2 \bar\te_2 (\bar\te_1 - \pi)^2 \cot\frac{\bar\te_2}{2}
\\
+ (\pi - \bar\te_2) \csc^2\frac{\bar\te_1}{2}\big(\bar\te_1(\bar\te_1
- \bar\te_2) (\bar\te_1 - \pi) + (\bar\te_1^2 - \bar\te_2 \pi) \sin \bar\te_1\big)\bigg]
\end{multline*}
This automatically shows that the derivative vanishes
on~$\Gamma^{\cR_s}_{4}$. Since the first factor is always positive in
the interior of $\cR_s$ and on $\Gamma^{\cR_s}_{2}$, we only need to show that
$\widetilde{G}_s(\bar\te_1,\bar\te_2) >0$ in~$\cR_s$.

We will next show that $\widetilde{G}_s$ is increasing in
$\bar\te_1$, which will be enough to finish the proof once we show that $\widetilde{G}_s(\frac{s-1}{s+1}\pi,\bar\te_2) > 0$. We start by taking a derivative
with respect to $\bar\te_1$, which gives us
\begin{align*}
\pa_{\bar\te_1} \widetilde{G}_s(\bar\te_1,\bar\te_2) & =
\left[(\bar\te_1 - \bar\te_2)(\pi-\bar\te_2)\csc ^{2}\left(\frac{\bar\te_1}{2}\right)\right]
                                                       \big(
                                                       T_1(\bar\te_1)- T_2(\bar\te_1) \big)\,,
\end{align*}
where
\begin{align*}
T_1(x) &:= \frac{x}{\pi-x} \cot\frac{x}{2} \,,\\
T_2(x) &:= -\frac{2\pi}{(\pi-x)^{3}}(x - \pi + \sin x)\,.
\end{align*}
The factor in square brackets is always strictly positive. We shall now show that
the second factor is also strictly positive. We remark that this
factor, that is $T_1(\bar\te_1)- T_2(\bar\te_1)$, is independent
of~$s$ and~$\bar\te_2$, so we are left with two functions of just one variable.


\begin{lemma}\label{lemmaT1T2}
$T_1(x) \geq T_2(x)$ for all $x \in [0,\pi]$, with equality only at $x = 0$.
\end{lemma}

\begin{proof}
An easy computation shows that $T_1(0) = T_2(0), T_1'(0) = T_2'(0)$. We will show that $T_1''(x) > 0$ and $T_2''(x) \leq 0$ for all $x \in [0,\pi]$.

Expanding the cotangent in partial fractions~\cite[4.3.91]{Abramowitz}
as
\[
\cot z=\frac1z +2z \sum_{k\geq1} \frac1{z^2-(k\pi)^2}
\]
we get
\begin{align*}
T_1(x) & = \frac{2}{\pi-x} + \sum_{k\geq 1}\frac{4x^{2}}{(\pi-x)(x^2-(2k\pi)^2)}\,.
\end{align*}
Taking two derivatives we obtain, for $x \in (0,\pi)$,
\begin{align}
T_1''(x) & = \frac{4}{(\pi-x)^3} 
+ \sum_{k\geq 1}\bigg(-\frac{8}{(4k^2-1) (\pi-x)^3} 
- \frac{8k}{(x-2k\pi)^3(2k-1)}\notag\\
&\qquad \qquad \qquad \qquad \qquad \qquad \qquad \qquad \qquad\qquad- \frac{8k}{(x+2k\pi)^3(2k+1)}\bigg) \notag\\[1mm]
& = -\sum_{k\geq 1}\left(\frac{8k}{(x-2k\pi)^3(2k-1)} + \frac{8k}{(x+2k\pi)^3(2k+1)}\right) \notag\\[1mm]
& = \sum_{k \geq 1} \frac{32 k^2 (4 k^2 \pi^{3} + 12 k^2 \pi^{2} x + 3 \pi x^2 + x^3)}{(2k-1) (1 + 2 k) (2 k \pi - x)^3 (2 k \pi + x)^3} > 0,\label{T1pp}
\end{align}
where we have used that the first term of the infinite sum is telescopic and cancels out with the term which is independent of $k$.

A straightforward computation yields
\begin{align*}
T_2''(x) & = 2 \pi \frac{6 (\pi - x) (1 - \cos x) - (12 - (\pi -
           x)^2) \sin x}{(\pi-x)^5}\,,
\end{align*}
so obviously $T_2''(x) \leq 0$ if and only if
\begin{align*}
6 (\pi - x) (1 - \cos x) - (12 - (\pi - x)^2) \sin x & \leq 0\,.
\end{align*}
When written in terms of the variable $\frac x2$, this can be readily shown to be
equivalent to demanding that
\begin{equation}\label{eqtan2}
\tan\frac{x}{2}  \leq \frac{12-(\pi-x)^2}{6(\pi-x)} = \frac{2}{\pi-x} - \frac{\pi-x}{6}\,.
\end{equation}
The Taylor expansion of this function at~$\pi$ can be read
off~\cite[4.3.70]{Abramowitz}:
\begin{align*}
\tan\frac{x}{2} & = \frac{2}{\pi-x} - 2\sum_{j\geq 0} \frac{(-1)^{j+1}(x-\pi)^{2j+1}}{(2j+2)!}B_{2j+2} \\
& = \frac{2}{\pi-x} - \frac{\pi-x}{6} - 2\sum_{j\geq 1} \frac{(-1)^{j}(\pi-x)^{2j+1}}{(2j+2)!}B_{2j+2} \,.
\end{align*}
Here $B_n$ denotes the $n$th Bernoulli number. Since $(-1)^{j}B_{2j+2}
> 0$ for every $j \geq 1$, we infer~\eqref{eqtan2} and the lemma follows.
\end{proof}

The last step is to show that
$\widetilde{G}_s(\frac{s-1}{s+1}\pi,\bar\te_2) > 0$. To this end we will
define the auxiliary one-variable function
\begin{align*}
\widehat{G}_s(\bar\te_2) :=
  \widetilde{G}_s\Big(\frac{s-2}{s}\pi,\bar\te_2\Big)\, \frac{2s\pi}{\pi-\bar\te_2}\,.
\end{align*}
A short computation shows that
\[
\widehat{G}_s'(\bar\te_2)  = 4s\pi T_1'(\bar\te_2) + 
( s-2) \pi s \sec^2\frac{\pi}{s} -s^3  \tan\frac{\pi}{s}
\]
and
\[
\widehat{G}_s''(\bar\te_2)  = 4s\pi T_1''(\bar\te_2)\,,
\]
which is positive on $(0,\pi)$ by~\eqref{T1pp}. As moreover
\begin{align*}
\widehat{G}_s\Big(\frac{s-2}{s}\pi\Big)= \widehat{G}_s'\Big(\frac{s-2}{s}\pi\Big) = 0\,,
\end{align*}
we infer that $\widehat{G}_s(\bar\te_2) > 0$ for all $\bar\te_2
< \frac{s-2}{s}\pi$, as we wanted to
prove. Proposition~\ref{propderivadaGpositiva} then follows.
\end{proof}

Two immediate consequences of Proposition~\ref{propderivadaGpositiva}
are that there are no critical points of $G_s$ in the interior of
$\cS_s$ and that the minimum of $G_s$ in $\cR_s$ is attained on $\Gamma^{\cR_s}_{1} \cup \Gamma^{\cR_s}_{4}$.
Furthermore,
\begin{align}
\min_{(\bar\te_1,\bar\te_2) \in \cS_s} G_s(\bar\te_1,\bar\te_2) = \min_{(\bar\te_1,\bar\te_2) \in \Gamma_{1}^{\cS_s}} G_s(\bar\te_1,\bar\te_2) \geq \min_{(\bar\te_1,\bar\te_2) \in \Gamma_{1}^{\cR_s}} G_s(\bar\te_1,\bar\te_2)\,,\label{minima}
\end{align}
with equality if and only if the RHS minimum is attained at 
\[
(\bar\te_1,\bar\te_2) =
\Big(\frac{s-1}{s+1}\pi,0 \Big)\quad \text{or}\quad
(\bar\te_1,\bar\te_2) = \Big (\frac{s-1}{s+1}\pi,\frac{s-2}{s}\pi \Big)\,.
\]

In either of the two cases, the minimum over $\cS_{s}$ is attained at only one single point, respectively:
\[
(\bar\te_1,\bar\te_2) =
\Big(\frac{s-1}{s+1}\pi,0 \Big)\quad \text{or}\quad
(\bar\te_1,\bar\te_2) = \Big (\pi,\frac{s-2}{s}\pi \Big)\,.
\]

We shall next show that the minimum is indeed attained at the latter
point:

\begin{lemma}\label{lemma2}
Let 
\begin{align*}
g_s(\bar\te_2) & := G_s \Big(\frac{s-1}{s+1}\pi,\bar\te_2 \Big)
\end{align*}
be the restriction of~$G_s$ to the boundary $\Gamma_{1}^{\cR_s}$. The
minimum of $g_s$ in the interval $[0,\frac{s-2}s\pi]$ is attained at
the endpoint $\bar\te_2 = \frac{s-2}{s}\pi$.
\end{lemma}

\begin{proof}
We will show that the function $g_s$, which can be written as
\[
g_s(\bar\te_2)=\frac{4 \pi \bar\te_2 \cot\frac{\bar\te_2}{2} + (s^2 -1) (\pi-\bar\te_2) ((s-2 ) \pi - s \bar\te_2)\tan\frac{\pi}{s+1}}{(s - 2) (\pi - \bar\te_2) ((s-1 ) \pi - (1 + s) \bar\te_2)}\,,
\]
is a decreasing function of $\bar\te_2\in [0,\frac{s-2}2\pi]$. If we
take a derivative we find
\begin{align*}
g_s'(\bar\te_2) = - 2\pi \frac{\widehat g_s(\bar\te_2) + (s^2 - 1)\tan\frac{\pi}{1+s}}{(s-2 ) (\bar\te_2(1+s) +  \pi(1-s))^2}\,.
\end{align*}
where we have set
\begin{align*}
\widehat g_s(\bar\te_2) & := \frac{2 (\bar\te_2^2 (1 + s) - (s-1) \pi^{2}) \cot\frac{\bar\te_2}{2} +  \bar\te_2(\bar\te_2 - \pi) (\bar\te_2(1+s) + \pi(1-s)) \csc^2\frac{\bar\te_2}{2}}{(\bar\te_2-\pi)^{2}}\,.
\end{align*}

We will now compute the minimum of $\widehat g_s$ by taking its
derivative and locating its zeros:
\begin{align*}
\widehat g_s'(\bar\te_2) & = -\frac{((s-1 ) \pi - (1 + s) \bar\te_2) \csc^2\frac{\bar\te_2}{2} }{(\pi-\bar\te_2)^3} \Big[(\pi - \bar\te_2)^2 \bar\te_2\cot\frac{\bar\te_2}{2} + 2 \pi (\bar\te_2 + \sin\bar\te_2 -\pi )\Big]
\end{align*}
By Lemma \ref{lemmaT1T2}, the second factor is strictly positive
on~$(0,\pi]$ and thus
\begin{align*}
\widehat g_s'(\bar\te_2) < 0\,,
\end{align*}
which implies that
\begin{align*}
 \min_{\bar\te_2\in [0,\frac{s-2}s\pi]} \widehat g_s(\bar\te_2) + (s^2 - 1)\tan\frac{\pi}{1+s}  &= \widehat g_s\Big(\frac{s-2}{s}\pi\Big) + (s^2 - 1)\tan\frac{\pi}{1+s}\\
& > \widehat g_s\Big(\frac{s-1}{s+1}\pi\Big) + (s^2 - 1)\tan\frac{\pi}{1+s}= 0\,.
\end{align*}
This shows that $g_s(\bar\te_2)$ is decreasing, thereby finishing the proof.
\end{proof}

By~\eqref{minima} and Lemma~\ref{lemma2}, we then have that
\begin{align*}
\min_{(\bar\te_1,\bar\te_2) \in \cS_s} G_s(\bar\te_1,\bar\te_2) = G_s \Big (\pi,\frac{s-2}{s}\pi \Big)\,,
\end{align*}
and in fact $(\pi,\frac{s-2}{s}\pi) $ is the only
minimizer in~$\cS_s$. This implies that the only possible solution of
$P'_{s,n}$ has some of its angles equal to $\pi$ (since $k$ and $n-k$
are both greater than 0), and therefore we dealt with it before as
part of the boundary cases of $P_{s,n}$. This finishes the proof.
\end{proof}

\begin{remark}
 The convexity of the polygon is a necessary condition, since one can
 construct non-convex maximizers which are not semi-regular by
 appending a straight ``hair'' (that is, a long spike of
 negligible area) to a convex polygon. This allows to construct arbitrary values of $S(P)$ for any $|P| < \frac{1}{4\pi}$ by changing the length of the hair and its interior angle. See Figure \ref{F.Pelo}.
\end{remark}

\begin{figure}
\centering
\includegraphics[scale=0.4]{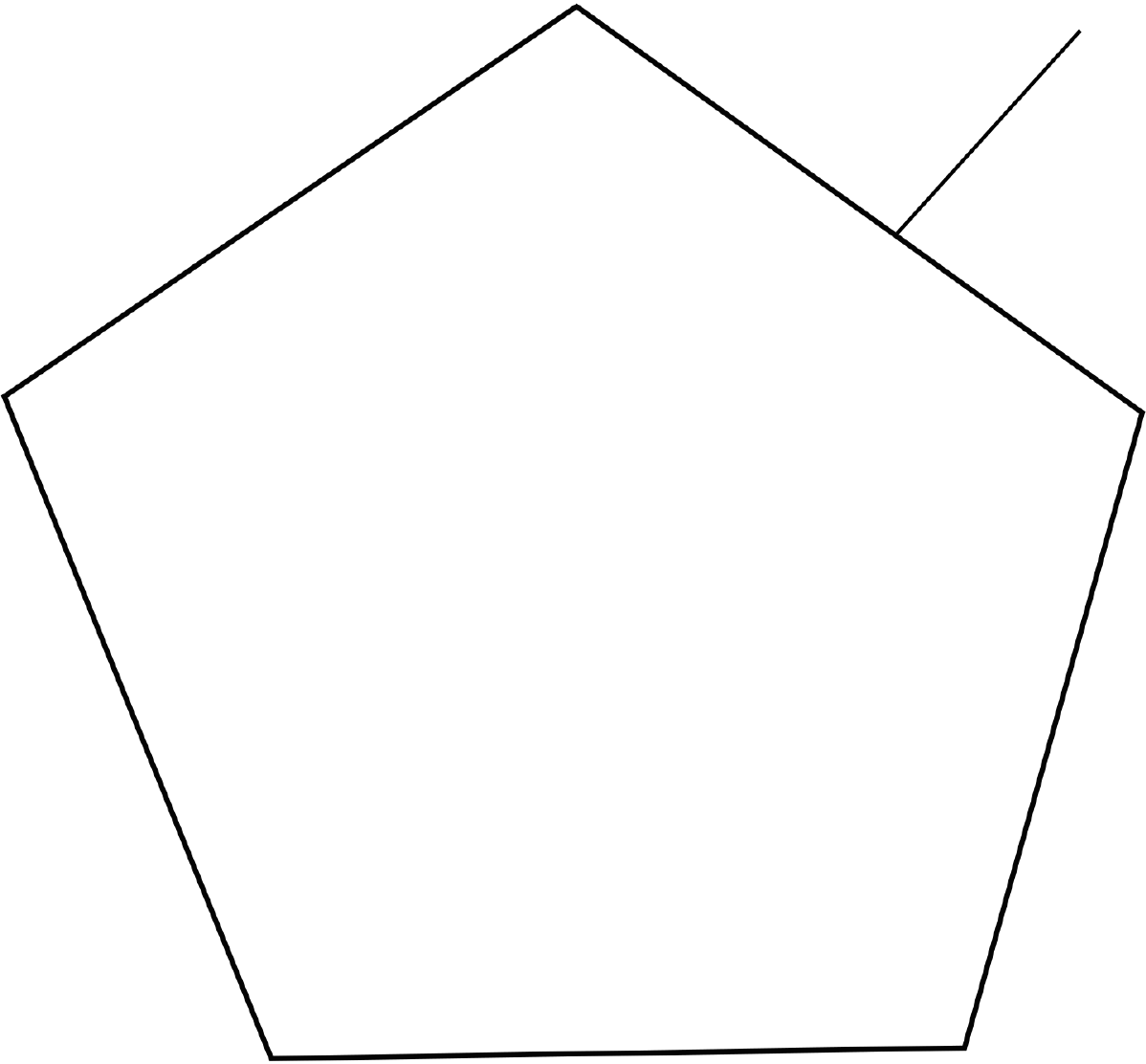}
\caption{Regular pentagon with a hair.}
\label{F.Pelo}
\end{figure}

An immediate consequence of Theorem~\ref{T.maximizers} is that
circumscriptible polygons as above (in particular, $n$-sided regular
polygons) are spectrally determined in the class of convex polygons:

\begin{corollary}\label{C.maximizers}
Let $P_n$ be a circumscriptible $n$-sided polygon whose angles are all
equal but possibly one, which must then be larger than the rest. If a
convex polygon~$P$ has the same Dirichlet or Neumann spectrum
as~$P_n$, then it is congruent to it.
\end{corollary}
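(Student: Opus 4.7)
The plan is to deduce the corollary directly from Theorem~\ref{T.maximizers} together with the heat trace asymptotics for polygons recalled at the start of this section. Since by hypothesis $P$ is already a convex polygon, no preliminary reduction is required; all that remains is to extract the relevant spectral invariants and then invoke the extremal characterization.

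First I would compare heat trace coefficients. The formula
\[
h_P\Dir(t)=\frac{|P|}{4\pi t} -\frac{|\pd P|}{8\sqrt{\pi t}} +\frac{S(P)}{24\pi}+ O(e^{-c/t})
\]
(and its Neumann analogue, which differs only in the sign of the perimeter term) shows that $|P|$, $|\pd P|$ and $S(P)$ are all spectral invariants of a polygon. Hence if $P$ and $P_n$ are isospectral then $|P|=|P_n|$, $|\pd P|=|\pd P_n|$ and $S(P)=S(P_n)$. Dilation by a factor $\lambda$ rescales the eigenvalues by $\lambda^{-2}$ and so preserves isospectrality, so I may rescale both polygons by the common factor $|\pd P_n|^{-1}$ and assume without loss of generality that $|\pd P|=|\pd P_n|=1$.

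Next I would invoke Theorem~\ref{T.maximizers}. Because $P_n$ is semi-regular with unit perimeter, it is a maximizer in the sense of the definition above: every convex unit-perimeter polygon $Q$ with $S(Q)=S(P_n)$ satisfies $|Q|\leq|P_n|$. Now $P$ is a convex unit-perimeter polygon with $S(P)=S(P_n)$ and $|P|=|P_n|$, so it attains the maximal area and is therefore itself a maximizer. The first half of Theorem~\ref{T.maximizers} then forces $P$ to be circumscriptible with all its angles equal but possibly one, which is larger than the rest. Finally, the ``moreover'' clause of the same theorem, which asserts that two non-congruent polygons of this form (even with different numbers of sides) have distinct values of~$S$, combined with $S(P)=S(P_n)$, implies that $P$ must be congruent to~$P_n$.

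There is no serious technical obstacle in this deduction: the full difficulty has already been packaged into Theorem~\ref{T.maximizers}, which supplies both the extremal characterization of semi-regular polygons and the injectivity of the functional~$S$ on this class. The corollary is then essentially a dictionary translation of those geometric facts into spectral language, via the first three coefficients of the heat trace expansion.
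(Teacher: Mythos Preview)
Your argument is correct and is precisely the intended one: the paper states the corollary as an immediate consequence of Theorem~\ref{T.maximizers} without giving a separate proof, and you have simply written out the obvious deduction via the heat trace invariants and the extremal/injectivity statements of that theorem.
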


%

\section{Reduction to a problem for polygons and conclusion of the proof}
\label{S.strategy}

In view of Corollary~\ref{C.maximizers}, Theorem~\ref{T.main} will
follows once we prove that a convex piecewise smooth domain (possible
with straight corners) that is isospectral
to a polygon must be a polygon too. This is an easy consequence of the
fact that the fourth order term in the asymptotic expansion of the
(Dirichlet or Neumann) heat trace is zero if and only if the boundary
of the domain consists of segments:

\begin{proposition}\label{P.heat}
Let $\Om$ be a piecewise smooth domain with $m\geq0$ straight
corners. Then the asymptotic behavior of its Dirichlet heat trace at small times
is
\[
h_\Om\Dir(t)=\frac{|\Om|}{4\pi t} -\frac{|\pd\Om|}{8\sqrt{\pi
    t}}
+\frac1{12\pi}\bigg(\int_{\pd\Om}\ka\,ds+\sum_{k=1}^m\frac{\pi^2-\te_k^2}{2\te_k}\bigg)
+\frac{\sqrt t}{256\sqrt\pi} \int_{\pd\Om}\ka^2\, ds + O(t)\,,
\]
where $\te_k\in(0,2\pi)$ is the interior angle at the $j\th$ corner
point, $\ka$ is the curvature of the boundary at each
differentiable point and~$s$ is an arc-length parameter. Likewise, in the case
of Neumann boundary conditions one has
\[
h_\Om\Neu(t)=\frac{|\Om|}{4\pi t} +\frac{|\pd\Om|}{8\sqrt{\pi
    t}}
+\frac1{12\pi}\bigg(\int_{\pd\Om}\ka\,ds+\sum_{k=1}^m\frac{\pi^2-\te_k^2}{2\te_k}\bigg)
+\frac{5\sqrt t}{256\sqrt\pi} \int_{\pd\Om}\ka^2\, ds + O(t)\,.
\]
\end{proposition}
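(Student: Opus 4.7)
The strategy is the classical decomposition of the heat trace asymptotics into contributions from the interior, the smooth part of the boundary, and the corner points, exploiting the locality of the short-time heat kernel. I would start with a smooth partition of unity $\{\chi_0,\chi_1,\dots,\chi_m\}$ on $\BOm$, where $\chi_k$ (for $1\leq k\leq m$) is supported in a small ball around the $k$th corner inside which $\pd\Om$ coincides with two straight half-lines meeting at angle $\te_k$, and $\chi_0$ is supported away from the corners. By the standard finite-propagation/parametrix bounds for the Dirichlet (or Neumann) heat kernel on bounded domains, the trace decomposes, modulo an exponentially small error $O(e^{-c/t})$, into the sum of the contributions of each piece $\chi_k$, which can be computed by working on an appropriate model domain in a neighborhood of $\supp\chi_k$.

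On the support of $\chi_0$ the boundary is $C^\infty$, so the classical McKean--Singer parametrix construction applies and yields the area term $\frac{|\Om|}{4\pi t}$, the perimeter term $\mp\frac{|\pd\Om|}{8\sqrt{\pi t}}$ (with a sign depending on the boundary condition), the portion $\frac{1}{12\pi}\int_{\pd\Om}\ka\,ds$ of the constant term, and the $\sqrt t$-coefficient $\frac{1}{256\sqrt\pi}\int_{\pd\Om}\ka^2\,ds$ for Dirichlet or $\frac{5}{256\sqrt\pi}\int_{\pd\Om}\ka^2\,ds$ for Neumann. These are the standard heat invariants for a smooth boundary; the Neumann case differs from the Dirichlet case only by the sign of the perimeter term and the factor $5$ in front of the curvature-squared integral. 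On the support of $\chi_k$ with $k\geq 1$, the boundary consists of two straight segments meeting at angle $\te_k$, so the heat kernel there agrees with that of the exact infinite wedge of opening $\te_k$ modulo exponentially small errors. The integrated trace of the wedge heat kernel can be evaluated explicitly using the Sommerfeld--Carslaw contour integral representation, and a direct computation yields the corner contribution $\frac{1}{24\pi}\cdot\frac{\pi^2-\te_k^2}{\te_k}$ to the constant term together with \emph{no} $\sqrt t$ or higher local contribution; the area and perimeter contributions from the wedge piece glue seamlessly with those from $\chi_0$. The straight-corner hypothesis is used precisely here: if the boundary were merely Lipschitz at the corner, curvature of nearby arcs would interact with the corner and generate additional contributions at orders $\sqrt t$ and higher whose explicit expression is not known.

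The main obstacle is the rigorous justification of this gluing up to order $\sqrt t$, since the $\sqrt t$ term mixes smooth-boundary and corner information and so a careless cutoff argument can easily spoil it. For Dirichlet boundary conditions, the result is due to Ray (unpublished, cf.~\cite{Kac,Cheeger}), while for Neumann boundary conditions it has only recently been established by Mazzeo and collaborators~\cite{Mazzeo}. Once these two inputs are in hand, the remainder of the proof amounts to collecting the contributions from the pieces $\chi_k$ into the stated formulas and verifying that the interface terms from the partition of unity cancel out to the order considered.
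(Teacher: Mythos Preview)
Your proposal is correct and follows essentially the same approach as the paper: decompose the heat trace into smooth-boundary contributions (for which the paper simply cites Branson--Gilkey~\cite{Gilkey}) and straight-corner contributions (for which it cites van~den~Berg--Srisatkunarajah~\cite{Berg} for Dirichlet and Mazzeo--Rowlett~\cite{Mazzeo} for Neumann), then sum. The paper's own proof is in fact even terser than yours---it gives no details of the partition-of-unity or wedge-kernel mechanism and simply points to these references---so your sketch is a faithful expansion of the same argument; the only bibliographic difference is that you attribute the Dirichlet corner term to Ray's unpublished work via~\cite{Kac,Cheeger}, whereas the paper additionally cites~\cite{Berg} for a complete published proof.
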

\begin{proof}
If the domain is smooth, the coefficients of the asymptotic expansion
of the heat trace can be found in~\cite{Gilkey}. In the
Dirichlet case, the
contribution of a straight corner was first reported by Ray
and a full proof can be found in~\cite{Berg}. In the Neumann case, the
contribution of a straight corner can be found in~\cite{Mazzeo}. As is
customary, the
sum of the coefficients of the smooth domain and of the straight
corner gives the final formula presented in the statement.
\end{proof}

An immediate corollary of Proposition~\ref{P.heat}, which together
with Corollary~\ref{C.maximizers} completes the proof of
Theorem~\ref{T.main}, is the following:

\begin{corollary}\label{C.heat}
A piecewise smooth domain with straight corners  to a polygon must be a polygon.
\end{corollary}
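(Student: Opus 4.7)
The plan is to extract from isospectrality the vanishing of the $\sqrt{t}$ coefficient in the heat trace asymptotics of~$\Omega$, and then argue that this forces the smooth part of $\partial\Omega$ to have zero curvature everywhere, so that $\partial\Omega$ is a finite union of line segments.

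More concretely, suppose $\Omega$ is a convex piecewise smooth domain with $m\geq 0$ straight corners that is isospectral (say Dirichlet; the Neumann case is analogous) to a polygon $P$. Since the heat trace is a spectral invariant, all coefficients of the short-time asymptotic expansion in Proposition~\ref{P.heat} must agree for $\Omega$ and $P$. The edges of $P$ are straight, so $\kappa\equiv 0$ along the smooth part of $\partial P$, giving
\[
\int_{\partial P}\kappa^2\, ds=0.
\]
Matching the coefficient of $\sqrt t$ then yields
\[
\int_{\partial\Omega}\kappa^2\, ds=0.
\]
Since $\kappa$ is a continuous function on each smooth arc of $\partial\Omega$ and $\kappa^2\geq 0$, this integral identity forces $\kappa\equiv 0$ on every smooth arc. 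Hence each such arc is a line segment, and because the boundary is $C^\infty$ away from finitely many corner points and consists of finitely many arcs joined at the corners, $\partial\Omega$ is a finite union of straight segments. Therefore $\Omega$ is a polygon, as claimed.

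There is no real obstacle here beyond invoking Proposition~\ref{P.heat}: the whole content of the corollary is packaged into the nonnegativity of $\kappa^2$ and the positivity of the coefficient $\tfrac{1}{256\sqrt\pi}$ (respectively $\tfrac{5}{256\sqrt\pi}$ in the Neumann case). Combining this with Corollary~\ref{C.maximizers} then finishes the proof of Theorem~\ref{T.main}, since the polygon produced by Corollary~\ref{C.heat} is convex and isospectral to the semi-regular polygon~$P_n$, hence congruent to it.
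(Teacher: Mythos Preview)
Your proof is correct and follows essentially the same approach as the paper: both argue that isospectrality with a polygon forces the $\sqrt{t}$ coefficient in Proposition~\ref{P.heat} to vanish, hence $\int_{\partial\Omega}\kappa^2\,ds=0$, hence $\kappa\equiv 0$ and $\Omega$ is a polygon. Your version is slightly more detailed (spelling out the nonnegativity argument and the Neumann coefficient), but the content is the same.
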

\begin{proof}
Since the term of order $\sqrt t$ in the asymptotic expansion of the
heat trace of~$\Om$ must vanish, we infer that
\[
\int_{\pd\Om}\ka^2\, ds=0\,,
\]
i.e., that the boundary of~$\Om$ is flat.  Hence it must consist of
segments, so $\Om$ is a polygon.
\end{proof}

\section*{Acknowledgments}

A.E.\ is supported by the ERC Starting Grant~633152. J.G.-S. was
partially supported by an AMS-Simons Travel Grant, by the grant
MTM2014-59488-P (Spain) and by the Simons Collaboration
Grant~524109. Both authors were partially supported by the
ICMAT--Severo Ochoa grant SEV-2015-0554. We would like to thank Michel van den Berg, Jeff
Cheeger, Peter Gilkey, Fabricio Maci\`a, Daniel Peralta-Salas, Julie Rowlett and David Sher for helpful discussions.

\bibliographystyle{amsplain}

\end{document}